\documentclass[runningheads]{llncs}

\usepackage[T1]{fontenc}
\usepackage[utf8]{inputenc}
\usepackage{lmodern}
\usepackage{amsmath,amssymb,mathtools}
\usepackage{booktabs}
\usepackage{array}
\usepackage{tabularx}
\usepackage{microtype}
\usepackage{url}

\usepackage[main=english]{babel}
\usepackage{xcolor}
\usepackage{cases}
\usepackage[unicode,hidelinks]{hyperref}
\hypersetup{
  pdftitle={One-Dimensional Symmetry Reductions of the Monge--Ampere Equation on Explicit Convex Cones in R4},
  pdfauthor={Amir Valiakhmetov, Ewsey Obzherin, Alexey Chernov, Roland Hildebrand},
  pdfsubject={Explicit convex cones, canonical barriers, and Monge--Ampere reductions},
  pdfkeywords={conic optimization, canonical barrier, affine sphere, Monge--Ampere equation},
  bookmarksopen=true,
  bookmarksnumbered=true
}
\newcolumntype{Y}{>{\raggedright\arraybackslash}X}
\newcolumntype{L}[1]{>{\raggedright\arraybackslash}p{#1}}

\DeclareMathOperator{\Aut}{Aut}
\DeclareMathOperator{\aut}{aut}
\DeclareMathOperator{\spanop}{span}
\DeclareMathOperator{\diag}{diag}
\DeclareMathOperator{\cl}{cl}

\newcommand{\gl}{\mathfrak{gl}}

\newcommand{\phid}{\dot\phi}
\newcommand{\phidd}{\ddot\phi}

\newcommand{\bkappa}{\boldsymbol{\kappa}}

\begin{document}

\title{One-Dimensional Symmetry Reductions of the Monge--Amp\`ere Equation on some Semi-Homogeneous Convex Cones in \texorpdfstring{$\mathbb R^4$}{R4}}
\titlerunning{Symmetry Reductions on Explicit Convex Cones}

\author{Amir Valiakhmetov\inst{1} \and Ewsey Obzherin\inst{1,2}\thanks{Corresponding author.} \and Alexey Chernov\inst{1} \and Roland Hildebrand\inst{1}}
\authorrunning{A. Valiakhmetov et al.}
\institute{Moscow Institute of Physics and Technology (MIPT), Dolgoprudny, Russia
\and FusionBrain Lab\\
\email{\href{mailto:obzherin.ewsey@ya.ru}{obzherin.ewsey@ya.ru}}}

\maketitle

\begin{abstract}
Motivated by non-symmetric conic optimization, we construct six explicit families of semi-homogeneous proper convex cones $K$ in $\mathbb R^4$. For each family, a specified three-dimensional subgroup of linear automorphisms of $K$ has generic orbits of codimension one. This yields an orbit invariant $t$ and reduces the Monge-Amp\`ere equation $\det F''=e^{2F}$, defining the canonical barrier on $K$, to an ordinary differential equation (ODE) with independent variable $t$. The families include asymmetric power, rotational-power, exponential, and nonabelian-type cones. For the power and exponential cases we derive first integrals and parametric solutions of the ODE. The reductions open the way for computing the canonical barrier on these cones numerically.

\keywords{Convex cones \and Monge--Amp\`ere equation \and Conic optimization \and Canonical barrier}
\end{abstract}

\section{Introduction}

Interior-point methods for conic optimization require efficiently evaluable logarithmically homogeneous self-concordant barriers. Such barriers are explicitly known for symmetric cones \cite{NesterovNemirovski1994}. Nonsymmetric examples of cones used in optimization are power and exponential cones \cite{Nesterov12,Alfonso22,Hypathia22}. Generalized power cones also have useful modeling and structural properties \cite{LinLindstromLourencoPong2024}. All these cones are \textit{semi-homogeneous}, i.e., their automorphism group $\Aut(K)$ has generic orbits of at most codimension 1 \cite{Hildebrand2014Analytic}. 

For every proper convex cone $K\subset\mathbb R^n$, the canonical barrier \cite{Hildebrand2014Canonical,Fox2012} is the unique solution $F: K^{\circ} \to \mathbb R$ of the Monge-Amp\`ere equation
\begin{equation}\label{eq:ma}
    \det F''=e^{2F},\qquad F''\succ0,\qquad F|_{\partial K}=+\infty.
\end{equation}
It satisfies the logarithmic homogeneity condition
\begin{equation}\label{eq:homogeneous}
        F(\lambda x)=F(x)-n\log\lambda ,\qquad \lambda>0,\ x \in K^{\circ}.
\end{equation}
Here the level sets of $F$ are the complete hyperbolic affine spheres asymptotic to $\partial K$, and existence and uniqueness follow from corresponding work\cite{Calabi1972,ChengYau1980,ChengYau1982,Sasaki1985,ChengYau1986,NomizuSasaki1994,LiSimonZhao1993}. 

In this paper we consider explicit families of 4-dimensional semi-homogeneous cones. We designate an explicit connected subgroup $G\subset\Aut(K)$ with generic codimension 1 orbits. All orbit statements refer to $G$ unless $\Aut(K)$ is explicitly mentioned. For special cases inside each family $\Aut(K)$ may be transitive on the interior $K^{\circ}$ of the cone, but the subgroup $G$ still has orbits of smaller dimension.

We construct six explicit cone families in $\mathbb R^4$, including new nonabelian-type examples, and give full set of generators for $G$. The rotational-power family consists of generalized power cones already used in conic optimization. For this family, our contribution is an explicit reduction of \eqref{eq:ma} to an ODE. The asymmetric-power and exponential reductions generalize Hildebrand's three-dimensional power and exponential cases, whereas the two nonabelian-type families and their reduced boundary-value problems are new. For each family an orbit invariant $t: K^{\circ} \to \mathbb R$ and an explicit summand $F_0$ satisfying the relation
\begin{equation}\label{eq:auttransform}
        F(gx)=F(x)-\log|\det g|,\qquad g\in\Aut(K).
\end{equation}
are given such that the canonical barrier can be written as
\begin{equation}\label{eq:ansatz}
        F=F_0+\phi(t)
\end{equation}
with the function $\phi$ satisfying the ODE which is obtained by reducing \eqref{eq:ma}, and $F_0$ being logarithmically homogeneous and constant on each orbit of the unimodular subgroup of $G$. For the power and exponential cones we construct first integrals and parametric solutions. The nonabelian-type cases (here nonabelian refers to the Lie algebra of $G$) yield explicit boundary-value problems. Positivity, smooth matching, and asymptotic behaviour at the boundary select candidate solutions, but uniqueness of the integration constants is asserted only in the former cases. 

\section{Preliminaries} \label{sec:Normalizations}

A cone $K\subset\mathbb R^4$ is called proper if it is closed, pointed, convex, and has non-empty interior. Here pointed means $K\cap(-K)=\{0\}$.
Its automorphism group and Lie algebra are defined by
\begin{equation*}
        \Aut(K)=\{A\in GL_4(\mathbb R):\ A[K]=K\},
\end{equation*}
\begin{equation*}
        \aut(K)=\{X\in\gl_4(\mathbb R):\exp(tX)[K]=K\text{ for all }t\in\mathbb R\}.
\end{equation*}
The canonical barrier satisfies the transformation law \eqref{eq:auttransform} \cite{Hildebrand2014Canonical}.
In particular, every unimodular automorphism preserves $F$. For the one-parameter subgroups $G \subset \Aut(K)$ used below we have $\det g>0$, so the absolute value may be omitted. 
Throughout the paper, $G$ denotes the connected subgroup generated by the specified Lie algebra for each of the cone families. Statements about generic orbit dimension or codimension refer to $G$ unless $\Aut(K)$ is mentioned explicitly.

\section{Explicit Cone Families} \label{sec:MainStatement}

Throughout the paper, points in $\mathbb R^4$ are written as column vectors
$(x,y,z,w)^T$, and $\cl Q$ denotes the closure of a subset $Q \subset
\mathbb R^4$. For the two nonabelian-type families we use
\begin{equation*}
        \Delta=2xz-y^2.
\end{equation*}
We consider the following six explicit families of 4D semi-homogeneous cones. The first 4 cases lead to abelian Lie algebras, the last two are of nonabelian type.

\begin{enumerate}
\item The asymmetric power cone
\begin{equation}\label{eq:powerlayer}
\begin{split}
K_{\alpha,\beta}^{p,q,r}=\cl\{(x,y,z,w)^T:\;&x,y,z>0,\\
&-\alpha x^{1/p}y^{1/q}z^{1/r}<w
 <\beta x^{1/p}y^{1/q}z^{1/r}\},
\end{split}
\end{equation}
where $\alpha,\beta>0$, $p,q,r>1$, and $p^{-1}+q^{-1}+r^{-1}=1$. These assumptions exclude the homogeneous degenerations. E.g., $p=q=2$, $r=\infty$ and $\alpha=\beta$ leads to $K \simeq \mathcal L_3\times\mathbb R_+$, while $q = r = \infty$ yields $K \simeq \mathbb R_+^4$.

\item The rotational-power cone
\begin{equation}\label{eq:normpower}
K_{\lambda}^{2,2}=\cl\left\{(x,y,z,w)^T:
x,y>0,\ \sqrt{z^2+w^2}<x^\lambda y^{1-\lambda}\right\},
\end{equation}
where $\alpha,\beta>0$ and $0<\lambda<1$. The exceptional value $\lambda=\tfrac12$ yields $\mathcal L_4$: with $u=(x+y)/2$, and $v=(x-y)/2$, the inequality becomes $u^2>v^2+z^2+w^2$. Further, $\lambda=0,1$ gives $\mathcal L_3\times\mathbb R_+$. In the other cases we get non-homogeneous cones \cite{LinLindstromLourencoPong2024}.

\item The exponential-power cone
\begin{equation}\label{eq:exppower}
K_{{\rm pow},a}=\cl\left\{(x,y,z,w)^T:
z,w>0,\ \frac yz>\exp(x/z)\left(\frac wz\right)^{-a}\right\},
\qquad a>0.
\end{equation}

\item The exponential-quadratic cone
\begin{equation}\label{eq:exppar}
K_{{\rm par}}=\cl\left\{(x,y,z,w)^T:
z>0,\ \frac yz>\exp\left(\frac xz+\left(\frac wz\right)^2\right)\right\}.
\end{equation}

\item The asymmetric mixed-power cone
\begin{equation}\label{eq:lorentzpower}
\begin{split}
K^N_{\gamma,\alpha,\beta}=\cl\{(x,y,z,w)^T:\;&x,z>0,\ \Delta>0,\\
&-\alpha\Delta^{\gamma/2}z^{1-\gamma}<w
 <\beta\Delta^{\gamma/2}z^{1-\gamma}\},
\end{split}
\end{equation}
where $\alpha,\beta>0$ and $0<\gamma<2$.

\item The logarithmic-parabolic cone
\begin{equation}\label{eq:lorentzlog}
K^N_{\log}=\cl\left\{(x,y,z,w)^T:
x,z>0,\ \Delta>0,\ w<z\log\frac{\Delta}{z^2}\right\}.
\end{equation}
\end{enumerate}

All inequalities inside the closure describe the interiors of the respective cones. Their convexity, pointedness, and explicit automorphisms are verified in the next section. Those coordinate variables which are explicitly designated as positive will be referred to as \textit{base coordinates} further on.

\section{Convexity and Explicit Automorphisms} \label{sec:AutomorphismAlgebras}

We now define the subgroup $G \subset \Aut(K)$ by specifying a basis of its Lie algebra for every cone family. For a displayed Lie algebra element $A \in \mathbb R^{4 \times 4}$, the one-parameter group $\exp(sA)$ acts on the
column vector $(x,y,z,w)^T$. The scalar generator
\begin{equation*}
S=\left(\begin{smallmatrix}
1&0&0&0\\0&1&0&0\\0&0&1&0\\0&0&0&1
\end{smallmatrix}\right)
\end{equation*}
preserves every cone and is always an element of the Lie algebra of $G$. We now either give two more generators, or three generators
whose sum is $S$, for each family. We use only these explicit subgroups and do not claim that the displayed generators exhaust the full automorphism algebra.

\paragraph{Asymmetric power cone.}
Let $h=x^{1/p}y^{1/q}z^{1/r}$. The infinitesimal automorphisms of $K$ include
\begin{equation*}
\begin{aligned}
A_x&=\left(\begin{smallmatrix}1&0&0&0\\0&0&0&0\\0&0&0&0\\0&0&0&1/p\end{smallmatrix}\right),&
A_y&=\left(\begin{smallmatrix}0&0&0&0\\0&1&0&0\\0&0&0&0\\0&0&0&1/q\end{smallmatrix}\right),&
A_z&=\left(\begin{smallmatrix}0&0&0&0\\0&0&0&0\\0&0&1&0\\0&0&0&1/r\end{smallmatrix}\right).
\end{aligned}
\end{equation*}
Indeed, each positive base-coordinate scaling multiplies $w$ by the corresponding
factor $h$, and $A_x+A_y+A_z=S$. Since the exponents are positive and sum
to one, $h$ is concave and homogeneous of degree one on $\mathbb R_+^3$.
Both inequalities $w\leq\beta h$ and $-w\leq\alpha h$ therefore define convex
hypographs, proving convexity of \eqref{eq:powerlayer}.

\paragraph{Rotational-power cone.}
The following three matrices generate base scalings and rotations of the elliptic fibers, respectively:
\begin{equation*}
\begin{aligned}
A_x&=\left(\begin{smallmatrix}1&0&0&0\\0&0&0&0\\0&0&\lambda&0\\0&0&0&\lambda\end{smallmatrix}\right),&
A_y&=\left(\begin{smallmatrix}0&0&0&0\\0&1&0&0\\0&0&1-\lambda&0\\0&0&0&1-\lambda\end{smallmatrix}\right),\\[-1mm]
R&=\left(\begin{smallmatrix}0&0&0&0\\0&0&0&0\\0&0&0&-1\\0&0&1&0\end{smallmatrix}\right).
\end{aligned}
\end{equation*}
Here $A_x+A_y=S$, while $\exp(sR)$ preserves $z^2+w^2$.
The function $x^\lambda y^{1-\lambda}$ is concave, and
$x^\lambda y^{1-\lambda}-\sqrt{ z^2+ w^2}$ is concave. Its
nonnegative superlevel set is exactly \eqref{eq:normpower}, hence this cone is
convex.

\paragraph{Exponential-power cone.}
Besides $S$, define the Lie algebra generators
\begin{equation*}
B=\left(\begin{smallmatrix}0&0&1&0\\0&1&0&0\\0&0&0&0\\0&0&0&0\end{smallmatrix}\right),
\qquad
C=\left(\begin{smallmatrix}0&0&0&0\\0&-a&0&0\\0&0&0&0\\0&0&0&1\end{smallmatrix}\right).
\end{equation*}
Their actions are
\begin{equation*}
\begin{aligned}
\exp(sB)(x,y,z,w)^T&=(x+sz,e^sy,z,w)^T,\\
\exp(sC)(x,y,z,w)^T&=(x,e^{-as}y,z,e^sw)^T,
\end{aligned}
\end{equation*}
so the defining inequality in \eqref{eq:exppower} is invariant. Moreover,
$f(x,w)=e^xw^{-a}$ has
\begin{equation*}
\nabla^2f=f \cdot \left(\begin{smallmatrix}1&-a/w\\-a/w&a(a+1)/w^2\end{smallmatrix}\right),
\qquad \det\nabla^2f=a f^2/w^2>0.
\end{equation*}
Thus $f$ is convex, its perspective $z f(x/z,w/z)$ is convex, and
\eqref{eq:exppower} is its closed epigraph in the coordinate $y$.

\paragraph{Exponential-quadratic cone.}
Here the basis elements are $S$ and
\begin{equation*}
B=\left(\begin{smallmatrix}0&0&1&0\\0&1&0&0\\0&0&0&0\\0&0&0&0\end{smallmatrix}\right),
\qquad
C=\left(\begin{smallmatrix}0&0&0&-2\\0&0&0&0\\0&0&0&0\\0&0&1&0\end{smallmatrix}\right).
\end{equation*}
The action of $B$ is the same as above, whereas
\begin{equation*}
\exp(sC)(x,y,z,w)^T=(x-2sw-s^2z,y,z,w+sz)^T.
\end{equation*}
It leaves $x/z+(w/z)^2$ unchanged. The Hessian of
$f(x,w)=e^{x+w^2}$ is
\begin{equation*}
\nabla^2f=f\left(\begin{smallmatrix}1&2w\\2w&4w^2+2\end{smallmatrix}\right),
\qquad \det\nabla^2f=2f^2>0.
\end{equation*}
Consequently the perspective $z f(x/z,w/z)$ is convex and
\eqref{eq:exppar} is a convex epigraph.

\paragraph{Asymmetric mixed-power cone.}
Besides $S$, explicit generators are
\begin{equation*}
D=\left(\begin{smallmatrix}2&0&0&0\\0&1&0&0\\0&0&0&0\\0&0&0&\gamma\end{smallmatrix}\right),
\qquad
N=\left(\begin{smallmatrix}0&1&0&0\\0&0&1&0\\0&0&0&0\\0&0&0&0\end{smallmatrix}\right).
\end{equation*}
The action of $N$ is
\begin{equation*}
(x,y,z,w)^T\mapsto(x+sy+\tfrac12s^2z,y+sz,z,w)^T,
\end{equation*}
which fixes $\Delta$ and $z$. The action of $D$ multiplies $\Delta$ by
$e^{2s}$ and $w$ by $e^{\gamma s}$, and therefore preserves
$w/(\Delta^{\gamma/2}z^{1-\gamma})$. The function
$\sqrt\Delta$ is concave for positive base coordinates. Since the weighted geometric mean is concave and coordinate-wise increasing on $\mathbb R_+^2$,
$\Delta^{\gamma/2}z^{1-\gamma}$ is concave for $0\leq\gamma\leq2$.
The two hypograph inequalities in \eqref{eq:lorentzpower} are consequently convex.

\paragraph{Logarithmic-parabolic cone.}
Together with $S$, use
\begin{equation*}
N=\left(\begin{smallmatrix}0&1&0&0\\0&0&1&0\\0&0&0&0\\0&0&0&0\end{smallmatrix}\right),
\qquad
L=\left(\begin{smallmatrix}2&0&0&0\\0&1&0&0\\0&0&0&0\\0&0&2&0\end{smallmatrix}\right).
\end{equation*}
The first basis element again fixes $\Delta,z,w$. The second one gives
\begin{equation*}
(x,y,z,w)^T\mapsto(e^{2s}x,e^sy,z,w+2sz)^T,
\end{equation*}
which preserves $w-z\log(\Delta/z^2)$. Finally, the defining inequality is
equivalent on $z>0$ to
\begin{equation*}
        x>\frac{y^2}{2z}+\frac z2\exp(w/z).
\end{equation*}
The right-hand side is a sum of two convex perspective functions, so its
epigraph is convex.

The closures above have nonempty interiors. Pointedness is immediate for the power and rotational-power cones from the nonnegativity of their base coordinates.
For the exponential-power cone, the face $z=0$ has
$x\leq0$, $y\geq0$, and $w\geq0$; for the exponential-quadratic cone it also
has $w=0$. Hence neither cone contains a line. The base of the asymmetric mixed-power cone is pointed, so
this cone is pointed. In the logarithmic case, a vector and its
negative can both lie in the closure only if $x=y=z=0$ and simultaneously
$w\leq0$ and $-w\leq0$, hence $w=0$. Thus all six sets are proper convex
cones.

\begin{lemma}[Generic orbit rank]\label{lem:generic-orbit-rank}
For every family, the infinitesimal action of $G$ has rank three on an open dense subset of $K^\circ$, so its generic orbits have codimension one. If $t: K^{\circ} \to \mathbb R$ is an orbit invariant and $dt_x\ne0$, then
$T_x(G\cdot x)=\ker dt_x$. Hence every smooth $G$-invariant function locally factors through $t$.
\end{lemma}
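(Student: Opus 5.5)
The plan is to compute, for each family, the three vector fields $X_1(v)=A_1v$, $X_2(v)=A_2v$, $X_3(v)=A_3v$ at $v=(x,y,z,w)^T$, where $A_1,A_2,A_3$ are the displayed generators; if three generators summing to $S$ are given, we may replace one of them by $S$. The key observation is that the rank of $(X_1,X_2,X_3)$ at $v$ is the rank of a $4\times3$ matrix whose entries are polynomial in $(x,y,z,w)$. Suppose some $3\times3$ minor is a nonzero polynomial, or more generally a nonzero real-analytic function on the connected open set $K^\circ$. Then its zero set is closed and nowhere dense. Hence the rank is three on an open dense subset, and the orbits through those points are immersed three-dimensional submanifolds of codimension one. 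It therefore suffices to exhibit one nonvanishing minor per family at a single interior point, or symbolically.

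The explicit choices are as follows.
\begin{itemize}
\item Asymmetric power cone: take $X_x=(x,0,0,w/p)$ and $X_y=(0,y,0,w/q)$ together with the Euler field $X_S=(x,y,z,w)$. The minor in the first three coordinates is $xyz>0$ on $K^\circ$.
\item Rotational-power cone: the fields $A_x v$, $A_y v$, $Rv=(0,0,-w,z)$ have as their minor in the coordinates $(x,y,z)$ the value $-xyw$. This is nonzero off $\{w=0\}$; alternatively, use coordinates $(x,y,w)$ to get $xyz$. Together these show rank three wherever $(z,w)\neq0$, which is open and dense.
\item Exponential-power cone: $Bv=(z,y,0,0)$, $Cv=(0,-ay,0,w)$ and $Sv$. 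The minor in rows $(x,z,w)$ equals $\pm z^2w\neq0$.
\item Exponential-quadratic cone: $Bv=(z,y,0,0)$ and $Cv=(-2w,0,0,z)$. The minor in rows $(x,z,w)$ with $S$ is again a nonzero multiple of $z^3$.
\item Mixed-power and logarithmic cones: use $Nv=(y,z,0,0)$ and $Sv$ together with $Dv=(2x,y,0,\gamma w)$, respectively $Lv=(2x,y,0,2z)$. Taking rows $(y,z,w)$ gives minors $z\cdot(\ldots)$ that are visibly nonzero polynomials, e.g.\ $z\cdot(z\gamma w - 0)$ up to sign, or $2z^2$ after checking.
\end{itemize}
I would write these out cleanly; this is routine bookkeeping.

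For the second claim, let $t$ be $G$-invariant with $dt_x\neq0$. Differentiating $t(\exp(sA_i)x)=t(x)$ at $s=0$ gives $dt_x(A_ix)=0$, so $T_x(G\cdot x)=\mathrm{span}\{A_ix\}\subseteq\ker dt_x$. We should restrict to points where the rank is three, which we may assume for generic $x$; this is implicit in the statement. Then both spaces are three-dimensional, so they are equal. Finally, if $f$ is smooth and $G$-invariant, then $df_x$ annihilates $T_x(G\cdot x)=\ker dt_x$, so $df_x=c(x)\,dt_x$. Near $x$, choose coordinates $(t,u_1,u_2,u_3)$ with the $u_i$ along the orbits, which is possible by the implicit function theorem since $dt_x\neq0$. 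In these coordinates $\partial f/\partial u_i=0$, so locally $f=\psi(t)$, provided the local level sets of $t$ are connected. We arrange this by shrinking to a product neighbourhood.

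The main obstacle is the rank verification for the rotational-power and the two nonabelian families, where a single minor may vanish on a hypersurface that meets $K^\circ$. I will handle this by noting that the relevant exceptional set ($w=0$, $z=0$, etc.) is either outside $K^\circ$ or a proper analytic subset, so analyticity alone gives density. If some minor I pick turns out to be identically zero, I will switch to another triple of rows.
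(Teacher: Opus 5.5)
Your proposal is correct and follows the paper's proof. Both exhibit a nonvanishing $3\times3$ minor of the matrix with columns $A_kx$ for each family, then use $T_x(G\cdot x)\subseteq\ker dt_x$ with equal dimensions and the submersion theorem.

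The only differences are in the nonabelian families. The paper takes rows $(x,y,z)$ there, which gives $\pm z\Delta$; this is nonzero on all of $K^\circ$, so no analyticity argument is needed. Your rows $(y,z,w)$ give $\gamma z^2w$, which vanishes on $\{w=0\}\cap K^\circ$, and $2z^3$ (not $2z^2$ as you wrote). Both still suffice for the open-dense statement.
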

\begin{proof}
For each cone family, construct a $4 \times 3$ matrix whose columns are $A_kx$, where $A_k$ is a Lie algebra generator and $x$ runs through $K^{\circ}$. We shall show that this matrix is generically of full rank. Selected $3\times3$ minors for the six families are, respectively,
$xyz$,
$-xyw$ or $xyz$,
$-wz^2$, $-z^3$, $z\Delta$, and $-z\Delta$.
They are nonzero in the interior, except on the proper closed rotation axis
$\{z=w=0\}$ in the rotational-power case. Thus the generic rank is three and so is the generic dimension of the $G$-orbits.
Invariance gives $T_x(G\cdot x)\subseteq\ker dt_x$. If $dt_x\ne0$, then both spaces have dimension three. The local factorization follows from the submersion theorem.
\end{proof}

\section{Reduction of the Monge--Amp\`ere Equation} \label{sec:MAreduction}

Let $t$ be a complete local invariant of
the generic $G$-orbits, and let $F_0: K^{\circ} \to \mathbb R$ satisfy \eqref{eq:auttransform}. Then the difference $\phi = F-F_0$ is $G$-invariant and hence $F=F_0+\phi(t)$ locally by Lemma \ref{lem:generic-orbit-rank}. The chain rule gives
\begin{equation}\label{eq:common-hessian-reduction}
 F''=F_0''+\ddot\phi\,\nabla t\nabla t^\top+\dot\phi\,\nabla^2t.
\end{equation}
Equivariance then reduces PDE \eqref{eq:ma} to an ODE on $\phi(t)$ by the determinant calculations
displayed below for each case. Dots denote $t$-derivatives, and the endpoints of the domain of $t$ provide the boundary conditions. Below we shall specify in each case an explicit summand $F_0$ and deduce the reduction for the corresponding $\phi$. Below, $c$ and $C$ both denote real integration constants.

\subsection{Asymmetric power cone}\label{subsec:power-layer-solution}

Set
\begin{equation*}
h=x^{1/p}y^{1/q}z^{1/r},\qquad t=\frac wh,
\end{equation*}
so that $-\alpha<t<\beta$. Invariance and logarithmic homogeneity give
\begin{equation}\label{eq:power-short-ansatz}
F=-\frac{p+1}{p}\log x-\frac{q+1}{q}\log y
-\frac{r+1}{r}\log z+\phi(t).
\end{equation}
For $t>0$ put
\begin{equation*}
\tau=\log t,\qquad \varphi=\phi+\tau,\qquad
\xi=\frac{d\varphi}{d\tau}=1+t\dot\phi,
\end{equation*}
and define
\begin{equation*}
A=pqr,\qquad P(\xi)=(\xi-1)(\xi+p)(\xi+q)(\xi+r).
\end{equation*}
Differentiation of \eqref{eq:power-short-ansatz} by virtue of \eqref{eq:common-hessian-reduction}, followed by the
matrix determinant lemma, reduces \eqref{eq:ma} to
\begin{equation}\label{eq:power-short-ode}
\frac{P'(\xi)}{\xi}\frac{d\xi}{d\tau}-P(\xi)=Ae^{2\varphi}.
\end{equation}
Since $d\varphi/d\tau=\xi$, integration gives
\begin{equation}\label{eq:power-short-first}
e^{-\varphi}P(\xi)=Ae^\varphi+c,
\end{equation}
where $c\in\mathbb R$. Thus, with
\begin{equation*}
D(\xi)=c^2+4AP(\xi),\qquad
H(\xi)=\frac1{\xi-1}-\frac1{p(\xi+p)}
-\frac1{q(\xi+q)}-\frac1{r(\xi+r)},
\end{equation*}
every local nonconstant branch satisfying the algebraic admissibility conditions has the parametrization
\begin{equation}\label{eq:power-short-solution}
\begin{aligned}
t(\xi)&=t_0\exp\!\left\{\int_{\xi_0}^{\xi}
\frac{H(s)}2\left(1+\frac{c\sigma}{\sqrt{D(s)}}\right)ds\right\},\\
\phi(\xi)&=\log\!\left(
\frac{-c+\sigma\sqrt{D(\xi)}}{2A\,t(\xi)}\right),
\qquad \sigma\in\{-1,1\}.
\end{aligned}
\end{equation}
Here $t_0>0$ and $\xi_0$ fix a reference point of the parametrization.
One takes components on which $D>0$, the logarithm is defined, and the Hessian
of \eqref{eq:power-short-ansatz} is positive definite. The formula on $t<0$ is obtained by reflection. The solution defining the canonical barrier must also satisfy smooth matching at $t=0$, Hessian positivity, and
$\phi(t)\to+\infty$ as $t\downarrow-\alpha$ and $t\uparrow\beta$. These conditions constrain the constants, but a general existence-and-uniqueness theorem for the tuple $(c,t_0,\xi_0,\sigma)$ is not claimed here. After normalizing $\beta=1$ and, if necessary, reflecting $t$, we may without loss of generality restrict to $-2A<c\leq0$. The limit case $c=0$ gives the symmetric power cones with $\alpha = \beta$.

For completeness, in the symmetric case $\alpha=\beta=1$ the final expression
is elementary. With $\zeta\in\mathbb R$,
\begin{equation}\label{eq:power-short-symmetric}
\begin{aligned}
t(\zeta)&=\zeta
(\zeta^2+p+1)^{-1/(2p)}
(\zeta^2+q+1)^{-1/(2q)}
(\zeta^2+r+1)^{-1/(2r)},\\
\phi(\zeta)&=-\tfrac12\log(pqr)
+\frac{p+1}{2p}\log(\zeta^2+p+1)\\
&\quad+\frac{q+1}{2q}\log(\zeta^2+q+1)
+\frac{r+1}{2r}\log(\zeta^2+r+1).
\end{aligned}
\end{equation}

\subsection{Rotational-power cone}

A generalization of this case to arbitrary dimension was considered in \cite[Section 6]{Hildebrand2014Canonical} with a different but isomorphic definition of the cones. Set
\begin{equation*}
\rho=\sqrt{z^2+w^2},\qquad
t=\frac{\rho}{x^\lambda y^{1-\lambda}}\in[0,1),
\end{equation*}
\begin{equation}\label{eq:norm-short-ansatz}
F=-(1+2\lambda)\log x-(3-2\lambda)\log y+\phi(t).
\end{equation}
At the normal section $x=y=1$, $z=t$, $w=0$, define
\begin{equation*}
a_x=1+2\lambda+\lambda t\dot\phi,\qquad
a_y=3-2\lambda+(1-\lambda)t\dot\phi,\qquad
Q=\frac{\lambda^2}{a_x}+\frac{(1-\lambda)^2}{a_y}.
\end{equation*}
The determinant of the Hessian gives the scalar ODE
\begin{equation}\label{eq:norm-short-ode}
\frac{\dot\phi}{t}\,a_xa_y
\left[\ddot\phi-\dot\phi(\dot\phi+t\ddot\phi)Q\right]=e^{2\phi}.
\end{equation}
The formula at $t=0$ is understood by its smooth limit. To integrate it, put
\begin{equation*}
\tau=\log t,\qquad \psi=\phi+2\tau,\qquad
\xi=2+t\dot\phi,
\end{equation*}
\begin{equation*}
P_\lambda(\xi)=(\xi-2)^2(1+\lambda\xi)
(1+(1-\lambda)\xi).
\end{equation*}
Equation \eqref{eq:norm-short-ode} has the first integral
\begin{equation}\label{eq:norm-short-first}
e^\psi-P_\lambda(\xi)e^{-\psi}=C.
\end{equation}
Here $C\in\mathbb R$. Let
\begin{equation*}
S_\lambda(\xi)=\sqrt{C^2+4P_\lambda(\xi)},\qquad
U_{\lambda,\sigma}(\xi)=\frac{C+\sigma S_\lambda(\xi)}2,
\qquad \sigma\in\{-1,1\}.
\end{equation*}
On every local branch with $U_{\lambda,\sigma}>0$ and positive-definite Hessian, a parametric candidate is
\begin{equation}\label{eq:norm-short-solution}
\begin{aligned}
\tau(\xi)&=\tau_0+\int_{\xi_0}^{\xi}
\frac{P_\lambda'(s)}{s\,\sigma S_\lambda(s)
U_{\lambda,\sigma}(s)}\,ds,\\
t(\xi)&=e^{\tau(\xi)},\qquad
\phi(\xi)=\log U_{\lambda,\sigma}(\xi)-2\tau(\xi).
\end{aligned}
\end{equation}
The constants $\tau_0\in\mathbb R$ and $\xi_0$ fix the reference point. A canonical branch must satisfy $F''\succ0$, smoothness at the rotation axis ($\phi(0)$ finite and $\dot\phi(0)=0$), and $\phi(t)\to+\infty$ as $t\uparrow1$. We do not prove here that these conditions determine a unique tuple $(C,\tau_0,\xi_0,\sigma)$. 

\subsection{Exponential-quadratic cone}

The cone is given by
\[
K = \{(x, y, 0, 0) \mid x \leq 0, y \geq 0\} \cup \left\{(x, y, z, w) \mid z > 0, \frac{y}{z} \geq \exp\left(\frac{x}{z} + \frac{w^2}{z^2}\right)\right\}.
\]

\noindent Invariance under the unimodular part of $G$ gives the ansatz $F(x,y,z,w)=-\log y-3\log z+\phi(t)$, where $t=\log\frac yz-\frac xz-\left(\frac wz\right)^2$.

The derivatives of $F$ are given by

\[
F' = 
\begin{pmatrix}
-\dfrac{\dot\phi}{z}\\[2mm]
\dfrac{\dot\phi-1}{y}\\[2mm]
-\dfrac{3+\dot\phi}{z} + \dfrac{x\dot\phi}{z^2} + \dfrac{2w^2\dot\phi}{z^3}\\[2mm]
-\dfrac{2w\dot\phi}{z^2}
\end{pmatrix}.
\]

The full Hessian, with no omitted symmetric entries, is
\[
\resizebox{\textwidth}{!}{$\displaystyle
F''=\begin{pmatrix}
\dfrac{\ddot\phi}{z^2}
&-\dfrac{\ddot\phi}{yz}
&\dfrac{\dot\phi+\ddot\phi}{z^2}-\dfrac{x\ddot\phi}{z^3}-\dfrac{2w^2\ddot\phi}{z^4}
&\dfrac{2w\ddot\phi}{z^3}\\[3mm]
-\dfrac{\ddot\phi}{yz}
&\dfrac{\ddot\phi-\dot\phi+1}{y^2}
&-\dfrac{\ddot\phi}{yz}+\dfrac{x\ddot\phi}{yz^2}+\dfrac{2w^2\ddot\phi}{yz^3}
&-\dfrac{2w\ddot\phi}{yz^2}\\[3mm]
\dfrac{\dot\phi+\ddot\phi}{z^2}-\dfrac{x\ddot\phi}{z^3}-\dfrac{2w^2\ddot\phi}{z^4}
&-\dfrac{\ddot\phi}{yz}+\dfrac{x\ddot\phi}{yz^2}+\dfrac{2w^2\ddot\phi}{yz^3}
&\dfrac3{z^2}+\dot\phi\!\left(\dfrac1{z^2}-\dfrac{2x}{z^3}-\dfrac{6w^2}{z^4}\right)+\ddot\phi\!\left(-\dfrac1z+\dfrac{x}{z^2}+\dfrac{2w^2}{z^3}\right)^2
&\dfrac{2w(\ddot\phi+2\dot\phi)}{z^3}-\dfrac{2xw\ddot\phi}{z^4}-\dfrac{4w^3\ddot\phi}{z^5}\\[3mm]
\dfrac{2w\ddot\phi}{z^3}
&-\dfrac{2w\ddot\phi}{yz^2}
&\dfrac{2w(\ddot\phi+2\dot\phi)}{z^3}-\dfrac{2xw\ddot\phi}{z^4}-\dfrac{4w^3\ddot\phi}{z^5}
&\dfrac{4w^2\ddot\phi}{z^4}-\dfrac{2\dot\phi}{z^2}
\end{pmatrix}. $}
\]
Direct determinant computation gives

\[
\det F'' = \dfrac{2\phid\left(-\phid^3+\phid^2+4\phid\phidd-3\phidd\right)}{y^2z^6}
\]

Consequently, the reduced ODE is

\begin{equation} \label{exppar_ode}
2\phid\left(-\phid^3+\phid^2+4\phid\phidd-3\phidd\right) = e^{2\phi}
\end{equation}

By Sylvester's criterion (using the principal minor indexed by the first,
second, and fourth coordinates), positive definiteness is equivalent to the
following system:

\begin{equation}\label{eq:exppar-positivity}
\left\{
\begin{aligned}
\ddot\phi&>0,\\
\dot\phi&<1,\\
2\dot\phi\ddot\phi(\dot\phi-1)&>0,\\
\ddot\phi&>\frac{\dot\phi^2(1-\dot\phi)}{3-4\dot\phi}.
\end{aligned}
\right.
\end{equation}

\noindent Let $p=\dot\phi$ and $P(p)=2p^3(1-p)$. Then the ODE takes the form 
\[
\dfrac{d}{d\phi}(e^{-\phi}P(p)) = -e^{\phi}
\]

As in \cite[Sec.~4.1]{Hildebrand2014Analytic}, this yields
$-P + \dfrac{c^2}{4} = (e^{\phi} - \dfrac{c}{2})^2$. The inequalities in
\eqref{eq:exppar-positivity} imply $P < 0$. Also, $e^{\phi} > 0$, so

\begin{equation} \label{exppar_e^phi}
e^{\phi} = \dfrac{c}{2} + \sqrt{-P + \dfrac{c^2}{4}}
\end{equation}

\noindent Let $\bkappa = -\dfrac{1}{\dot\phi}$. From \eqref{exppar_ode} we have

\[
\dfrac{dt}{d\dot\phi} = \dfrac{1}{\ddot\phi} = \dfrac{8\dot\phi^2 - 6\dot\phi}{2\sqrt{-P + \dfrac{c^2}{4}}\left(\dfrac{c}{2}+\sqrt{-P+\dfrac{c^2}{4}}\right)}
\]

\noindent This integrates to

\[
t = \int_{0}^{\bkappa} \dfrac{8 + 6\bkappa}{2\sqrt{2 + 2\bkappa + \dfrac{c^2}{4}\bkappa^4}\left(\dfrac{c}{2}\bkappa^2 + \sqrt{2 + 2\bkappa + \dfrac{c^2}{4}\bkappa^4}\right)}d\bkappa
\]

As in \cite[Sec.~4.1]{Hildebrand2014Analytic}, the affine half-line
$\left\{(x,1,1,0)\mid x\leq0\right\}\subset K$ intersects all level surfaces
of $F$. Hence $F(x,1,1,0)\to-\infty$ as $x\to-\infty$, which gives
$\phi(t)\to-\infty$ and $e^{\phi(t)}\to0$ as $t\to+\infty$. It follows from
\eqref{exppar_e^phi} that $c\leq0$.

Let us consider two cases: in the first, we arrive at a contradiction, while in the second, we obtain a parameterized expression for the function.

\textbf{Case 1:} $c<0$. The integrand above has the asymptotic expansion
\[
\dfrac{8 + 6\bkappa}{2\sqrt{2 + 2\bkappa + \dfrac{c^2}{4}\bkappa^4}\left(\dfrac{c}{2}\bkappa^2 + \sqrt{2 + 2\bkappa + \dfrac{c^2}{4}\bkappa^4}\right)}
=3+\dfrac{1}{\bkappa}+O(\bkappa^{-2}),\qquad \bkappa\to+\infty.
\]
It follows that
\[
        t=3\bkappa+\log\bkappa+O(1),
        \qquad
        \bkappa=\frac t3-\frac13\log t+O(1).
\]
Using \eqref{exppar_e^phi},
\[
\begin{aligned}
\phi
&=\log\left(\frac c2+\sqrt{\frac{c^2}{4}+2\bkappa^{-3}+2\bkappa^{-4}}\right)  \\
&=\log\left(-\frac{2}{c}\right)-3\log\bkappa+O(\bkappa^{-1})  \\
&=\log\left(-\frac{54}{c}\right)-3\log t
        +O\left(\frac{\log t}{t}\right).
\end{aligned}
\]
Therefore
\[
\begin{aligned}
\lim_{z\to0^+}F(-1,1,z,0)
&=\lim_{z\to0^+}\left(-3\log z+
\phi\left(\frac1z-\log z\right)\right) \\
&=\lim_{z\to0^+}\left(-3\log z+
\log\left(-\frac{54}{c}\right)
-3\log\left(z^{-1}-\log z\right)\right) \\
&=\log\left(-\frac{54}{c}\right).
\end{aligned}
\]
This contradicts the boundary condition $F|_{\partial K}=+\infty$. Hence the complete solution cannot correspond to $c<0$.

\textbf{Case 2:} $c=0$. We get
\[
\begin{aligned}
t
&=\int_{-\infty}^{\dot\phi}
\frac{4s^2-3s}{2(s^4-s^3)}\,ds
=\int_{-\infty}^{\dot\phi}
\left(\frac{3}{2s^2}-\frac{1}{2s}+\frac{1}{2(s-1)}\right)ds  \\
&=-\frac{3}{2\dot\phi}+\frac12\log\frac{\dot\phi-1}{\dot\phi}.
\end{aligned}
\]
And now from \eqref{exppar_e^phi} we obtain
\[
        \phi=\frac12\log\left(2(\dot\phi^4-\dot\phi^3)\right).
\]
It yields the parametric representation for the solution $\phi(t)$, where $\bkappa$ runs through all positive reals:
\[
\begin{pmatrix}
t \\
\phi
\end{pmatrix}
=
\frac12
\begin{pmatrix}
\log(1+\bkappa)+3\bkappa \\
\log(1+\bkappa)-4\log\bkappa+\log 2
\end{pmatrix}.
\]

\subsection{Exponential-power cones}

The exponential-power family is defined as follows:

\[
\begin{aligned}
K = &\{(x, y, 0, w) \mid x \leq 0, y \geq 0, w \geq 0\} \\ \cup &\left\{(x, y, z, w) \mid z > 0, w > 0, \frac{y}{z} \geq \exp\left(\frac{x}{z}\right) \left(\frac{w}{z}\right)^{-a}\right\}, & a > 0
\end{aligned}
\]

\noindent The equivariant ansatz and orbit invariant are

\begin{equation*}
        F=-\log y-\log w-2\log z+\phi(t),
        \qquad
        t=\log\frac yz-\frac xz+a\log\frac wz,
\end{equation*}
The Hessian is
\[
\resizebox{\textwidth}{!}{$\displaystyle
\frac{1}{y^2w^2z^4}
\begin{vmatrix}
\ddot\phi & -\ddot\phi\frac{z}{y} & \dot\phi - \ddot\phi(\frac{x}{z} - (a + 1)) & -a\ddot\phi\frac{z}{w}\\
-\ddot\phi\frac{y}{z} & \ddot\phi + 1 - \dot\phi & \ddot\phi(\frac{xy}{z^2} - \frac{(a+1)y}{z}) & a\ddot\phi\frac{y}{w}\\
\dot\phi - \ddot\phi(\frac{x}{z} - (a + 1)) & \ddot\phi(\frac{x}{y} - \frac{z(a+1)}{y}) & \ddot\phi\left( \frac{x}{z} - (a + 1) \right)^2 + 2 + \dot\phi(a + 1 - 2\frac{x}{z}) & a\ddot\phi(\frac{x}{w} - (a+1)\frac{z}{w})\\
-a\ddot\phi\frac{w}{z} & a\ddot\phi\frac{w}{y} & a\ddot\phi(\frac{xw}{z^2} - (a + 1)\frac{w}{z}) & a^2\ddot\phi + 1 - a\dot\phi
\end{vmatrix}
$}
\]

\noindent Equation \eqref{eq:ma} reduces to
\begin{equation}\label{eq:exppowode}
        -a\phid^4+a\phid^3+4a\phid^2\phidd-3a\phid\phidd
        +\phid^3-\phid^2-3\phid\phidd+2\phidd=e^{2\phi}.
\end{equation}

Let $p = \dot\phi$. Equivalently,
\begin{equation*}
        M(p)\dot p+N(p)=e^{2\phi},
\end{equation*}
where
\begin{equation*}
        M(p)=4ap^2-3(a+1)p+2,
        \qquad
        N(p)=-ap^4+(a+1)p^3-p^2.
\end{equation*}
The cancellation
\[
        N'(p)+pM(p)=0
\]
gives
\[
\begin{aligned}
\frac{d}{dt}
\left(e^\phi+N(p)e^{-\phi}\right)
&=
e^{-\phi}
\left(
p e^{2\phi}+N'(p)\dot p-pN(p)
\right)\\
&=
e^{-\phi}
\left(
p(M\dot p+N)-pM\dot p-pN
\right)=0.
\end{aligned}
\]
\begin{equation}\label{eq:exppowfirst}
        e^\phi+N(p)e^{-\phi}=C.
\end{equation}

The condition for positive definiteness of the Hessian is written as follows:

\[
\begin{cases}
\ddot\phi > 0,\\[2mm]
\dot\phi < 1,\\[2mm]
a\dot\phi < 1,\\[2mm]
M\ddot\phi + N > 0.
\end{cases}
\]

\noindent Then $-N = \dot\phi^2(\dot\phi - 1)(a\dot\phi - 1) > 0$ and

\begin{equation}\label{exppower_e^phi}
e^\phi = \dfrac{c}{2} + \sqrt{-N + \dfrac{c^2}{4}}
\end{equation}

\noindent From \eqref{eq:exppowode} we obtain

\[
\dfrac{dt}{d\dot\phi} = \dfrac{1}{\ddot\phi} = \dfrac{M}{e^{2\phi} - N} = \dfrac{M}{2\sqrt{-N + \dfrac{c^2}{4}}\left(\dfrac{c}{2} +\sqrt{-N +\dfrac{c^2}{4}}\right)}
\]

\noindent This integrates to

\[ t = \int_{0}^{\bkappa} \dfrac{4a + 3(a+1)\bkappa + 2\bkappa^2}{2\sqrt{a + (a+1)\bkappa + \bkappa^2 + \dfrac{c^2}{4}\bkappa^4}\left(\dfrac{c}{2}\bkappa^2 + \sqrt{a + (a+1)\bkappa + \bkappa^2 + \dfrac{c^2}{4}\bkappa^4}\right)}d\bkappa \]

\noindent Using \eqref{exppower_e^phi}, we can prove that $c \leq 0$: affine half-line $\left\{ (x, 1, 1, 1) \mid x \leq 0 \right\} \subset K$ intersects all level surfaces of $F$. That yields:

\begin{align*}
\lim_{x\to-\infty} F(x,1,1,1)=-\infty \implies
\lim_{t\to+\infty} \phi(t) = -\infty \implies
\lim_{t\to+\infty} e^{\phi} = 0 \implies
c \leq 0
\end{align*}

As in the previous subsection, let us consider two cases: in the first, we arrive at a contradiction, while in the second, we obtain a parameterized expression for the function.

\textbf{Case 1:} $c < 0$. The integrand above has the asymptotic expansion

\[
2+\dfrac{a+1}{\bkappa}+O(\bkappa^{-2}),\qquad \bkappa\to+\infty.
\]

\noindent This integrates to

\[
t = 2\bkappa + (a + 1)\log\bkappa + O(1) \implies \bkappa = \dfrac{t}{2} - \dfrac{a + 1}{2}\log t + O(1)
\]

\noindent We obtain the asymptotics of $\phi$ using \eqref{exppower_e^phi}:

\[
\phi = \log\left( -\dfrac{4}{c} \right) - 2\log t + O\left( \dfrac{\log t}{t} \right)
\]

\noindent Therefore 

\[
\begin{aligned}
\lim_{z\to0^+}F(-1,1,z,1)
&=\lim_{z\to0^+}\left(-2\log z+
\phi\left(\frac1z-(a+1)\log z\right)\right) \\
&=\log\left(-\frac{4}{c}\right).
\end{aligned}
\]

\noindent Again, this contradicts the boundary condition $F|_{\partial K}=+\infty$. Hence the complete solution cannot correspond to $c<0$.

\textbf{Case 2:} $c = 0$. The integrand simplifies, and it follows that

\[t = \bkappa + \dfrac{a}{2}\log(a+\bkappa) + \dfrac{1}{2}\log(1 + \bkappa) - \dfrac{a}{2}\log a\]

\noindent Finally, \eqref{exppower_e^phi} yields the parametric representation for the solution $\phi(t)$, where $\bkappa$ runs through all positive reals:

\[
\begin{pmatrix}
t \\
\phi
\end{pmatrix}
=
\frac12
\begin{pmatrix}
\log(1+\bkappa)+a\log(a+\bkappa)+2\bkappa - a\log a\\
\log(1+\bkappa)+\log(a + \bkappa) - 4\log\bkappa
\end{pmatrix}.
\]

\subsection{Nonabelian Models}

For the asymmetric mixed-power cones, let
\begin{equation*}
t=\frac{w}{\Delta^{\gamma/2}z^{1-\gamma}},\qquad
F=-\frac{3+\gamma}{2}\log\Delta-(1-\gamma)\log z+\phi(t).
\end{equation*}
Evaluation at the normal section
$(x,y,z,w)^T=(1/2,0,1,t)^T$ and direct determinant computation give
\begin{equation}\label{eq:lorpowerode}
\begin{split}
&(\gamma t\dot\phi+\gamma+3)
\big[\gamma(\gamma-2)t\dot\phi^3
-2(\gamma^2-2\gamma+3)\dot\phi^2\\
&\hspace{29mm}
-4\gamma(\gamma-2)t\dot\phi\ddot\phi
+(15+2\gamma-\gamma^2)\ddot\phi\big]=e^{2\phi}.
\end{split}
\end{equation}
Thus the canonical barrier is reduced to the following two-point nonlinear
boundary-value problem:
\begin{equation}\label{eq:lorpower-bvp}
\begin{gathered}
t\in(-\alpha,\beta),\qquad \text{\eqref{eq:lorpowerode} holds},
\qquad F''\succ0,\\
\phi(t)\longrightarrow+\infty\quad\text{as }t\downarrow-\alpha
\quad\text{and as }t\uparrow\beta.
\end{gathered}
\end{equation}
Here $F''\succ0$ refers to the Hessian of the displayed reconstructed
four-variable function $F$. On the normal section, $t=-\alpha$ and $t=\beta$
correspond to the two lateral boundary hypersurfaces, along which $F_0$
remains finite. The remaining boundary strata, including $\Delta=0$ and
$z=0$, are controlled by the logarithmic terms
$F_0=-\frac{3+\gamma}{2}\log\Delta-(1-\gamma)\log z$.
Hence the lateral boundary condition is equivalent to the two stated blow-up
conditions for $\phi$.

For the logarithmic-parabolic cone, use
\begin{equation*}
t=\frac wz-\log\frac{\Delta}{z^2}<0,\qquad
F=-\frac32\log\Delta-\log z+\phi(t).
\end{equation*}
At the same normal section the equation becomes
\begin{equation}\label{eq:logode}
(2\dot\phi+3)
\left[(16\dot\phi+15)\ddot\phi-4\dot\phi^3-6\dot\phi^2\right]
=e^{2\phi}.
\end{equation}
The corresponding boundary-value problem is
\begin{equation}\label{eq:log-bvp}
\begin{gathered}
t\in(-\infty,0),\qquad \text{\eqref{eq:logode} holds},
\qquad F''\succ0,\\
\phi(t)\to+\infty\quad(t\uparrow0),\qquad
\phi(t)+4\log(-t)\to+\infty\quad(t\to-\infty).
\end{gathered}
\end{equation}
Indeed, the second condition follows from logarithmic homogeneity because
$(-t)^{-1}(1/2,0,1,t)^T$ tends to the boundary ray
$(0,0,0,-1)^T$. Equations \eqref{eq:lorpower-bvp} and \eqref{eq:log-bvp}
are the final outputs of the two nonabelian reductions.

This form is also convenient computationally. One may solve the equations on
truncated intervals, impose the asymptotic boundary data at their endpoints,
and monitor $F''\succ0$ along the normal section. Logarithmic homogeneity and
the explicit automorphisms then reconstruct the barrier on the full cone from
the one-dimensional numerical solution.

\section{Conclusion} \label{sec:Conclusion}

We constructed six explicit cone families in $\mathbb R^4$ and a complete set of Lie algebra generators for a three-dimensional subgroup $G\subset\Aut(K)$ in each case. Knowledge of the behaviour of the canonical barrier on generic $G$-orbits permit to reduce the Monge-Amp\`ere equation \eqref{eq:ma} to a scalar ODE. The homogeneous exceptions are explicit: $\lambda=\tfrac12$ gives $\mathcal L_4$, while zero-weight power degenerations give products $L_3 \times \mathbb R_+$ or $\mathbb R_+^4$.

For the abelian power models we obtained first integrals and parametric candidate solutions of the ODE, together with the required positivity, matching, and boundary conditions. General uniqueness of the integration constants is not claimed. The exponential models remain explicit, and the nonabelian models give nonlinear boundary-value problems. The Hessian-based reductions, obtained by direct determinant computation, provide practical starting points for a numerical barrier construction.



\end{document}